\documentclass{article}

\usepackage{amsmath}
\usepackage{amsthm}

\title{The $q$-golden ratio, Catalan numbers, and an identity of Sauermann--Wigderson}
\author{Kevin Carde}
\date{2021-04-07}

\newtheorem{theorem}{Theorem}
\newtheorem{definition}[theorem]{Definition}
\newtheorem{proposition}[theorem]{Proposition}
\newtheorem{corollary}[theorem]{Corollary}

\begin{document}
\maketitle

\begin{abstract}
In this note, we present some basic properties of $q$-Fibonacci numbers
and their relationship to the $q$-golden ratio and Catalan numbers.
We then use this relationship to give a short proof of a combinatorial identity.
\end{abstract}

\section{$q$-Fibonacci numbers}

\begin{definition}
The \emph{$q$-Fibonacci numbers} are defined recursively by $F_0(q) = F_1(q) = 1$, $F_n(q) = F_{n-1}(q) + qF_{n-2}(q)$.
\end{definition}
Note that the original Fibonacci numbers are retrieved by setting $q=1$.

These $q$-Fibonaccis have a nice non-recursive form.

\begin{proposition}
$$
F_n(q) = \sum_{k=0}^{\lfloor \frac{n}{2} \rfloor} \binom{n-k}{k}q^k.
$$
\end{proposition}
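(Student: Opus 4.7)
The plan is to prove the identity by induction on $n$. The base cases $n = 0$ and $n = 1$ both reduce to verifying that the sum equals $1$, since in each case only the $k=0$ term $\binom{n}{0} q^0 = 1$ appears. For the inductive step, assume the formula holds for $F_{n-1}(q)$ and $F_{n-2}(q)$, and substitute both into the recursion $F_n(q) = F_{n-1}(q) + q F_{n-2}(q)$.

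Next, I would reindex the second sum by setting $j = k + 1$ so that the factor of $q$ is absorbed:
\[
q \sum_{k=0}^{\lfloor (n-2)/2 \rfloor} \binom{n-2-k}{k} q^k = \sum_{j=1}^{\lfloor (n-2)/2 \rfloor + 1} \binom{n-1-j}{j-1} q^j.
\]
After this reindexing, the coefficient of $q^j$ in $F_n(q)$ becomes $\binom{n-1-j}{j} + \binom{n-1-j}{j-1}$ for $j \geq 1$, which collapses via Pascal's rule to $\binom{n-j}{j}$, matching the claimed formula. The $j = 0$ term is $\binom{n-1}{0} = 1 = \binom{n}{0}$, as desired.

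The main bookkeeping issue — and the only real obstacle — is checking that the summation limits line up correctly, which requires a quick case split on the parity of $n$. When $n = 2m$ is even, the first sum only reaches $k = m - 1$ while the reindexed second sum reaches $j = m$; the ``extra'' top term $\binom{m-1}{m-1} = 1$ correctly supplies $\binom{n-m}{m} = \binom{m}{m} = 1$, so nothing is missed. When $n = 2m+1$ is odd, both sums reach $j = m = \lfloor n/2 \rfloor$ and Pascal's rule applies uniformly across the full range. Either way, the resulting expression is exactly $\sum_{j=0}^{\lfloor n/2 \rfloor} \binom{n-j}{j} q^j$, completing the induction.
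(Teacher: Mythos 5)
Your proof is correct and follows essentially the same route as the paper: induction on $n$ using the defining recurrence, with the coefficient of $q^k$ collapsing via Pascal's rule to $\binom{n-k}{k}$. The only difference is that you carefully track the summation limits and parity cases, whereas the paper suppresses the limits and leaves that bookkeeping implicit.
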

\begin{proof}
We carry out a quick proof by induction. Clearly this holds for $F_0(q) = F_1(q) = 1$.
Assuming the statement for $n-1$ and $n-2$, we have
\begin{align*}
F_n(q) = F_{n-1}(q) + qF_{n-2}(q) &= \sum_k \left(\binom{n-1-k}{k} + \binom{n-2-(k-1)}{k-1}\right) q^k \\
 &= \sum_{k=0}^{\lfloor \frac{n}{2} \rfloor} \binom{n-k}{k}q^k
\end{align*}
as desired.
\end{proof}

From this, by plugging in $q=1$,
we immediately have an identity relating the standard Fibonacci numbers to binomial coefficients:
$$
F_n = \binom{n}{0} + \binom{n-1}{1} + \dots + \binom{\lceil\frac{n}{2}\rceil}{\lfloor\frac{n}{2}\rfloor}.
$$

\section{The $q$-golden ratio}

It is well known that the the ratio of successive Fibonacci numbers approaches the golden ratio $\varphi = \frac{1+\sqrt{5}}{2}$.
We prove the following $q$-analog:

\begin{theorem}
For $n \ge 0$, we have as a power series around $q=0$
$$
\frac{F_{n+1}(q)}{F_n(q)} = \frac{1+\sqrt{1+4q}}{2} + O(q^{n+1}).
$$
In particular, as $n\to\infty$, the ratio approaches $\varphi(q) = \frac{1+\sqrt{1+4q}}{2}$,
which itself equals the golden ratio $\varphi$ when $q=1$.
\end{theorem}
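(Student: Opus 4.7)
The plan is to proceed by induction on $n$, leveraging the fact that $\varphi(q) = \frac{1+\sqrt{1+4q}}{2}$ satisfies the quadratic identity $\varphi(q)^2 = \varphi(q) + q$, or equivalently $1 + q/\varphi(q) = \varphi(q)$, as an immediate consequence of the quadratic formula. This is the natural $q$-analog of the classical identity $\varphi^2 = \varphi + 1$ for the golden ratio, and it is what lets the Fibonacci recursion cooperate with the proposed limit.

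For the base case $n = 0$, we have $F_1(q)/F_0(q) = 1$, which agrees with $\varphi(q) = 1 + q + O(q^2)$ modulo $q$. For the inductive step, assume $F_n(q)/F_{n-1}(q) = \varphi(q) + O(q^n)$ for some $n \ge 1$, and use the recursion to write
$$
\frac{F_{n+1}(q)}{F_n(q)} = 1 + q \cdot \frac{F_{n-1}(q)}{F_n(q)} = 1 + \frac{q}{\varphi(q) + O(q^n)}.
$$
Since $\varphi(q)$ has nonzero constant term, standard power-series inversion yields $\frac{q}{\varphi(q) + O(q^n)} = \frac{q}{\varphi(q)} + O(q^{n+1})$, and the quadratic identity then gives $1 + q/\varphi(q) = \varphi(q)$, closing the induction.

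The main (rather modest) obstacle is the power-series bookkeeping in the inductive step: one must justify that inverting $\varphi(q) + O(q^n)$ is legitimate as a formal power series in $q$ and that the error term propagates from order $n$ to order $n+1$ as claimed. Everything else reduces to the one-line algebraic fact that $\varphi(q)$ is a root of $x^2 - x - q$, which is built into its definition. The final sentence of the theorem, that $\varphi(1) = \varphi$, is just the observation that $\frac{1+\sqrt{5}}{2}$ is recovered by setting $q=1$.
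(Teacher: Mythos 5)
Your proof is correct and is essentially the paper's argument: the same induction on $n$ via the recurrence $\frac{F_{n+1}(q)}{F_n(q)} = 1 + q\,\frac{F_{n-1}(q)}{F_n(q)}$, with the same base case. The only cosmetic difference is that the paper pre-packages the reciprocal as the closed form $\frac{1}{\varphi(q)} = -\frac{1-\sqrt{1+4q}}{2q}$ and states an ``equivalent reciprocal formulation'' of the theorem, whereas you invert $\varphi(q) + O(q^n)$ inline and invoke $1 + q/\varphi(q) = \varphi(q)$; these amount to the same computation.
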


\begin{proof}

Before we begin, note that the theorem is equivalent (by taking reciprocals) to
$$
\frac{F_n(q)}{F_{n+1}(q)} = -\frac{1-\sqrt{1+4q}}{2q} + O(q^{n+1}).
$$

The theorem itself will be a straightforward proof by induction on $n$.
The base case of $n=0$ is immediate. We will now show that if the statement is true for $n-1$, it is also true for $n$.

By the defining recurrence for the $q$-Fibonaccis, we have
$$
\frac{F_{n+1}(q)}{F_n(q)} = \frac{F_n(q)+qF_{n-1}(q)}{F_n(q)} = 1 + q\frac{F_{n-1}(q)}{F_n(q)}.
$$
By the inductive hypothesis and the equivalent reciprocal formulation,
this is equal to
$$
1 + q\left(-\frac{1-\sqrt{1+4q}}{2q} + O(q^n)\right) = \frac{1+\sqrt{1+4q}}{2} + O(q^{n+1}),
$$
as desired.
\end{proof}

The formula appearing in this theorem requires no guessing to discover.
Ignoring the error term, the recurrence for the $q$-Fibonaccis tells us that
the $q$-golden ratio $\varphi(q)$ should satisfy $\varphi(q) = 1 + \frac{q}{\varphi(q)}$ or $\varphi(q)^2 - \varphi(q) - q = 0$,
the $q$-analog of the standard quadratic satisfied by $\varphi$.
By the quadratic formula and appropriate choice of signs, we immediately get $\varphi(q) = \frac{1+\sqrt{1+4q}}{2}$.
This all parallels how one might discover and prove that $F_{n+1}/F_n \to \varphi$ for the standard Fibonacci numbers.

\section{Catalan numbers and the Sauermann--Wigderson identity}

In the proof, we took the reciprocal
$$
\frac{F_n(q)}{F_{n+1}(q)} = -\frac{1-\sqrt{1+4q}}{2q} + O(q^{n+1}).
$$
Ignoring the error term, the right hand side is, upon replacing $q$ by $-q$,
the generating function for the Catalan numbers.

We therefore have the following:

\begin{corollary}
The coefficients on $1, q, q^2, \dots, q^n$ of $F_n(q)/F_{n+1}(q)$ are, up to sign, the Catalan numbers: $1, -1, 2, -5, 14, \dots, (-1)^nC_n$.
\end{corollary}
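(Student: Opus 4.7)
The plan is to use the reciprocal form of the theorem, which already does most of the work, together with the known closed form for the Catalan generating function.

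First I would recall that the ordinary generating function for the Catalan numbers is
$$
C(x) = \sum_{n \ge 0} C_n x^n = \frac{1 - \sqrt{1-4x}}{2x}.
$$
Substituting $x = -q$ gives
$$
\sum_{n \ge 0} (-1)^n C_n q^n = C(-q) = \frac{1 - \sqrt{1+4q}}{-2q} = -\frac{1 - \sqrt{1+4q}}{2q},
$$
which is exactly the right-hand side (ignoring the error term) appearing in the reciprocal form of the theorem.

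Next I would invoke the theorem itself in its reciprocal formulation:
$$
\frac{F_n(q)}{F_{n+1}(q)} = -\frac{1 - \sqrt{1+4q}}{2q} + O(q^{n+1}) = \sum_{k \ge 0} (-1)^k C_k q^k + O(q^{n+1}).
$$
The $O(q^{n+1})$ term affects only coefficients of $q^{n+1}$ and beyond, so comparing coefficients of $q^0, q^1, \dots, q^n$ on both sides yields exactly $1, -1, 2, -5, \dots, (-1)^n C_n$, as claimed.

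There is really no obstacle here — the theorem has packaged everything needed, and the only step requiring any care is matching the sign conventions: confirming that the substitution $x \mapsto -q$ in the Catalan generating function produces the alternating series with the correct overall sign, and that division by $-2q$ instead of $2q$ reproduces the formula from the theorem. Once this bookkeeping is verified, the corollary is immediate.
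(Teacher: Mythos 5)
Your proposal is correct and follows exactly the paper's route: it takes the reciprocal formulation of the theorem, recognizes $-\frac{1-\sqrt{1+4q}}{2q}$ as the Catalan generating function evaluated at $-q$, and matches coefficients below the $O(q^{n+1})$ error term. The only difference is that you spell out the sign bookkeeping explicitly, which the paper leaves as an informal remark preceding the corollary.
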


As a further corollary, we get a stronger version of an identity needed by Sauermann--Wigderson in \cite{SW}.
(This strengthening was noted and proved differently in \cite{Z}.)

\begin{corollary}
Let $m \le n$. Then
$$
\sum_{(m_1,\dots,m_t)} (-1)^t\binom{n-m_1}{m_1-1}\binom{n-m_2}{m_2}\cdots\binom{n-m_t}{m_t} = (-1)^m C_{m-1},
$$
where the sum is over all tuples (of varying length) of positive integers summing to $m$.
\end{corollary}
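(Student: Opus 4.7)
The plan is to recognize the left-hand side as a convolution of power-series coefficients, collapse the sum over $t$ via a geometric series, and then invoke the theorem.

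The first step is to identify which generating functions produce the two kinds of binomial factors. From $F_n(q) = \sum_k \binom{n-k}{k} q^k$ one reads off $\binom{n-m_i}{m_i} = [q^{m_i}] F_n(q)$, while a one-step reindexing $k \mapsto k-1$ gives $\binom{n-m_1}{m_1-1} = [q^{m_1}]\bigl(q F_{n-1}(q)\bigr)$. This is the key observation: the asymmetric first factor really comes from $F_{n-1}$, and the remaining factors come from $F_n$.

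Next I would treat the composition sum as a coefficient of a product. Set $A(q) = q F_{n-1}(q)$, which already has no constant term, and $B(q) = F_n(q) - 1$, the subtraction of $1$ being what enforces the constraint $m_i \ge 1$ in the internal factors while keeping $B(0)=0$. The sum then becomes
\[
\sum_{t \ge 1} (-1)^t [q^m]\bigl( A(q) B(q)^{t-1} \bigr) \;=\; -[q^m]\frac{A(q)}{1 + B(q)} \;=\; -[q^m] \frac{q F_{n-1}(q)}{F_n(q)},
\]
the geometric series being formally valid because $B(0)=0$.

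Finally I would apply the theorem at index $n-1$, in its reciprocal form, multiplied by $q$, which yields
\[
\frac{q F_{n-1}(q)}{F_n(q)} \;=\; -\frac{1-\sqrt{1+4q}}{2} + O(q^{n+1}).
\]
Plugging $x = -q$ into the Catalan generating function $\frac{1-\sqrt{1-4x}}{2x} = \sum_k C_k x^k$ and multiplying by $q$, the coefficient of $q^m$ on the right is $(-1)^{m-1} C_{m-1}$ for $1 \le m \le n$ (this is exactly where the hypothesis $m \le n$ is used, to make the error term harmless). Negating produces the stated $(-1)^m C_{m-1}$.

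I expect the only real obstacle to be the first step, namely spotting that the exceptional factor $\binom{n-m_1}{m_1-1}$ corresponds to $q F_{n-1}$ rather than to $F_n$ or $F_{n+1}$; once this is noticed, the generating-function manipulation and the appeal to the $q$-golden ratio theorem are essentially forced.
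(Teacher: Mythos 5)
Your proposal is correct and follows essentially the same route as the paper: encode the first factor as $qF_{n-1}(q)$, the remaining factors as $F_n(q)-1$, sum the geometric series to get $-qF_{n-1}(q)/F_n(q)$, and read off the Catalan coefficient. The only cosmetic difference is that you invoke the theorem's reciprocal form directly rather than via the intermediate Catalan corollary, which amounts to the same computation.
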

\begin{proof}
Note that
$$
\sum_k \binom{n-k}{k-1}q^k = \sum_k \binom{n-1-(k-1)}{k-1} q^k = qF_{n-1}(q).
$$

The other binomial coefficients appearing in the original identity are of the form $\binom{n-k}{k}$,
which are directly the coefficients in $F_{n}(q) - 1$.
(The requirement of positive $k$ means we need to subtract off the constant term.)

The desired quantity, therefore, is the $q^m$ coefficient in
$$
-qF_{n-1}(q)\Big(1 - (F_n(q)-1) + (F_n(q)-1)^2 - \dots\Big) = -q \frac{F_{n-1}(q)}{F_n(q)}.
$$

Because of the $-q$ out front, we are seeking the negative of the $q^{m-1}$ coefficient in $\frac{F_{n-1}(q)}{F_n(q)}$.
By the $q$-golden ratio Catalan corollary, our final result is $-(-1)^{m-1} C_{m-1} = (-1)^m C_{m-1}$, as desired.

\end{proof}

\end{document}